\newtheorem{definition}{Definition}
\newtheorem{lemma}[definition]{Lemma}
\newtheorem{theorem}[definition]{Theorem}
\newtheoremstyle{r}
{}
{}
{\normalfont}
{}
{\scshape}
{.}
{ }
{}
\theoremstyle{r}
\title{An extension of the universal power series of Seleznev}
\author{K. Maronikolakis and V. Nestoridis}
\date{}
\begin{document}
	\latintext
	\maketitle
	\begin{abstract}
		We show generic existence of power series $a = \sum\limits_{n = 0}^{\infty}a_nz^n, a_n \in \mathbb{C}$, such that the sequence $T_N(a)(z) = \sum\limits_{n=0}^{N}b_n(a_0, \dots , a_n)z^n,\linebreak N = 0, 1, 2 \dots$ approximates every polynomial uniformly on every compact set $K \subset \mathbb{C}\setminus\{0\}$ with connected complement. The functions $b_n : \mathbb{C}^{n + 1}\rightarrow\mathbb{C}$ are assumed to be continuous and such that for every $a_0, a_1, \dots , a_{n - 1} \in \mathbb{C}$, the function $\mathbb{C}\ni z \rightarrow b_n(a_0, a_1, \dots , a_{n - 1}, z)$ is onto $\mathbb{C}$. This clearly covers the case of linear functions $b_n$: $b_n(a_0, \dots, a_n) = \sum\limits_{k = 0}^{n}\lambda_{n,k}a_k, \lambda_{n,k} \in \mathbb{C}, \lambda_{n,n} \neq 0$.
	\end{abstract}
	\textbf{AMS classification number:} 30K05.\\
	\textbf{Key words and phrases:} Universal Taylor series, Baire's theorem, generic property, Mergelyan's theorem.
	\newpage
	\section{Introduction}
	A classical result of Seleznev states that there exists a formal power series $\sum\limits_{n = 0}^{\infty}a_nz^n, a_n \in \mathbb{C}$, such that its partial sums $S_N(z) = \sum\limits_{n = 0}^{N}a_nz^n$ have the following universal approximation property:
	
	For every compact set $K \subset \mathbb{C}\setminus\{0\}$ with connected complement and for every function $h : K\rightarrow\mathbb{C}$, which is continuous on $K$ and holomorphic in the interior of $K$, there exists a strictly increasing sequence of natural numbers $(\lambda_m)_{m \in \mathbb{N}}$, such that  $(S_{\lambda_m}(z))_{m \in \mathbb{N}}$ converges to $h$ uniformly on $K$, as $m \longrightarrow +\infty$ \cite{Bayart}, \cite{Seleznev}.
	
	If we identify the formal power series $\sum\limits_{n = 0}^{\infty}a_nz^n$ with the sequence $a = (a_0, a_1, \dots)$, then the previous fact holds on a $G_\delta$ and dense subset of $\mathbb{C}^{\aleph_0}$ endowed with the product topology \cite{Bayart}.
	
	It can easily be seen that the previous power series have zero radius of convergence. For universal Taylor series with strictly positive radius of convergence we refer to \cite{Chui}, \cite{Luh} and \cite{Nestoridis}; see also \cite{Bayart}.
	
	In this paper we extend the result of Seleznev in the case where the universal approximation is not achieved by $S_N(z) = \sum\limits_{n = 0}^{N}a_nz^n$, but it is achieved by $T_N(a)(z) = \sum\limits_{n=0}^{N}b_n(a_0, \dots , a_n)z^n$, where $b_n : \mathbb{C}^{n + 1}\rightarrow\mathbb{C}$ are given functions. Our assumptions are the continuity of such $b_n$ and that for every fixed $a_0, a_1, \dots , a_{n - 1} \in \mathbb{C}$, the function $\mathbb{C}\ni z \rightarrow b_n(a_0, a_1, \dots , a_{n - 1}, z)$ is onto $\mathbb{C}$. In particular, our results are valid if the functions $b_n$ are linear: $b_n(a_0, \dots, a_n) = \sum\limits_{k = 0}^{n}\lambda_{n,k}a_k, \lambda_{n,k} \in \mathbb{C}, \lambda_{n,n} \neq 0$.
	
	In this case, we prove that the universal approximation property is generic topologically and algebraically. That is, the set $U$ of universal power series $a \in \mathbb{C}^{\aleph _0}$ is a $G_\delta$ and dense subset of $\mathbb{C}^{\aleph _0}$ (topological genericity) and it contains a dense vector subspace except 0 (algebraic genericity).
	
	We also notice that our results easily imply the fact that for the generic power series $a = \sum\limits_{n = 0}^{\infty}a_nz^n$, the power series $\sum\limits_{n = 0}^{\infty}a_nz^n$, $\sum\limits_{n = 0}^{\infty}\frac{a_0 + \dots + a_n}{n + 1}z^n$ have zero radius of convergence.
	\section{Main Result}
	\begin{definition}
		For every integer $n\geq0$ let $b_n : \mathbb{C}^{n + 1}\rightarrow\mathbb{C}$ be a continuous function such that for every $a_0, a_1, \dots , a_{n - 1} \in \mathbb{C}$, the function $$\mathbb{C}\ni z \rightarrow b_n(a_0, a_1, \dots , a_{n - 1}, z)$$ is onto $\mathbb{C}$. Let $a = (a_0, a_1, \dots) \in \mathbb{C}^{\aleph_0}$. For every integer $N\geq0$ and $z \in \mathbb{C}$ we set $T_N(a)(z) = \sum\limits_{n=0}^{N}b_n(a_0, \dots , a_n)z^n$. Let $\mu$ be an infinite subset of $\mathbb{N}$.
		We define $U^\mu$ to be the set of $a \in \mathbb{C}^{\aleph_0}$, such that for every compact set $K \subset \mathbb{C}\setminus\{0\}$ with connected complement and for every function $h : K\rightarrow\mathbb{C}$, which is continuous on $K$ and holomorphic in the interior of $K$, there exists a strictly increasing sequence of integers $(\lambda_m)_{m \in \mathbb{N}},\lambda_m \in \mu$ such that  $(T_{\lambda_m}(a)(z))_{m \in \mathbb{N}}$ converges to $h(z)$ uniformly on $K$, as $m \longrightarrow +\infty$.
	\end{definition}
	We notice that if we assume that there exists a sequence of integers $(\lambda_m)_{m \in \mathbb{N}},\lambda_m \in \mu$, not necessarily strictly increasing, such that  $(T_{\lambda_m}(a)(z))_{m \in \mathbb{N}}$ converges to $h(z)$ uniformly on $K$ then the two definitions are equivalent; see \cite{Vlachou}.
	
	Considering the set $U^\mu$  as a subset of the space $\mathbb{C}^{\aleph_0}$ endowed with the product topology, we shall prove that $U^\mu$ is a countable intersection of open dense sets. Since $\mathbb{C}^{\aleph_0}$ is a metrizable complete space, Baire's theorem is at our disposal and so $U^\mu$ is a dense $G_\delta$ set.
	
	The following lemma is well known \cite{Bayart}, \cite{Luh}:
	\begin{lemma}
		There exists a sequence of infinite compact sets $K_m \subset \mathbb{C}\setminus\{0\},\\ m = 1, 2, \dots$ with connected complements, such that the following holds: every non-empty compact set  $K \subset \mathbb{C}\setminus\{0\}$ having connected complement is contained in some $K_m$.
	\end{lemma}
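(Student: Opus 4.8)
The plan is to build the family $(K_m)$ explicitly as closed disks from which an open neighbourhood of a polygonal path running from $0$ out to infinity has been deleted; the deleted channel is what keeps $0$ outside $K_m$ while forcing the complement to remain connected. Concretely, I would first fix a countable list $P_1, P_2, \dots$ of all polygonal paths that start at $0$, have all their vertices in $\mathbb{Q} + i\mathbb{Q}$, and consist of finitely many segments followed by a half-line tending to $\infty$ in a rational direction. For such a path $P$ and an integer $m \ge 1$ I set
\[
K_{P,m} = \overline{D}(0,m) \setminus N_{1/m}(P),
\]
where $\overline{D}(0,m) = \{ z : |z| \le m\}$ and $N_{1/m}(P)$ denotes the open $1/m$-neighbourhood of $P$. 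Enumerating the countably many pairs $(P,m)$ as a single sequence $(K_m)_{m\ge 1}$, discarding those members that are finite or empty, would give the desired list.

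The routine verifications come first. Each $K_{P,m}$ is the intersection of the closed disk with the closed set $\mathbb{C}\setminus N_{1/m}(P)$, hence is compact; since $P$ starts at $0$ we have $0 \in N_{1/m}(P)$, so $0 \notin K_{P,m}$; and for the relevant large $m$ the set is plainly infinite. For the complement I would note that
\[
\mathbb{C}\setminus K_{P,m} = \{ z : |z| > m\} \cup N_{1/m}(P),
\]
and that $N_{1/m}(P)$ is an open connected set, being an open neighbourhood of the connected set $P$, which, because $P$ runs out to infinity, meets $\{|z|>m\}$. The union of two overlapping connected sets is connected, so $\mathbb{C}\setminus K_{P,m}$ is connected.

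The heart of the argument is the absorption property. Given a non-empty compact $K \subset \mathbb{C}\setminus\{0\}$ with connected complement, the set $\mathbb{C}\setminus K$ is open, connected, hence polygonally path-connected, and it contains both $0$ and every point of large modulus. I would therefore join $0$ to a point outside $\overline{D}(0,R)$, where $K\subset \overline{D}(0,R)$, by a polygonal arc inside $\mathbb{C}\setminus K$ and then continue along a ray to infinity, producing a path that avoids $K$. Since its bounded part is a compact subset of the open set $\mathbb{C}\setminus K$ and its unbounded part lies outside $\overline{D}(0,R)\supseteq K$, this path has a positive distance $d$ from $K$; perturbing its vertices slightly, I may take it to be one of the rational paths $P$ on my list, still with $\mathrm{dist}(P,K) \ge d/2 > 0$. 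Choosing $m$ with $m > R$ and $1/m < d/2$ then gives $K \subset \overline{D}(0,m)$ and $K \cap N_{1/m}(P) = \emptyset$, i.e.\ $K \subseteq K_{P,m}$, as required.

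I expect the only genuinely delicate point to be reconciling connectedness of the complement with the requirement $0 \notin K_m$ in the presence of compact sets that wind around the origin; for instance a spiral arc joining two concentric circles meets every ray emanating from $0$ yet has connected complement, so no single radial slit can absorb it. Allowing the deleted channel to be the neighbourhood of an \emph{arbitrary} polygonal path, together with the density of rational polygonal paths in the connected open set $\mathbb{C}\setminus K$, is exactly what overcomes this obstacle; the remaining steps are elementary compactness and distance estimates.
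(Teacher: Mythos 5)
Your construction is correct and is essentially the standard one behind this lemma: the paper itself omits the proof, citing \cite{Bayart} and \cite{Luh}, where the exhausting family is exactly $\big\{z : |z|\le n,\ \mathrm{dist}(z,\Gamma_j)\ge \tfrac{1}{n}\big\}$ for an enumeration $(\Gamma_j)$ of polygonal paths with rational vertices joining $0$ to $\infty$, which is your $K_{P,m}$. The only detail worth pinning down is that the terminal half-line should be taken radially outward (so that it stays outside $\overline{D}(0,R)$ and keeps a positive distance from $K$); with that, all of your verifications go through.
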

	We fix now a sequence $K_m, m = 1, 2, \dots$ as in Lemma 2. Let $f_j, j = 1, 2, \dots$ be an enumeration of all polynomials having coefficients with rational coordinates. For any integers $m, j, s, N$ with $m \geq 1, j \geq 1, s \geq 1, N \geq 0$, we denote by $E(m, j, s, N)$ the set $$E(m, j, s, N) := \Big\{a \in \mathbb{C}^{\aleph_0} : \sup_{z \in K_m}\big|T_N(a)(z) - f_j(z)\big| < \frac{1}{s}\Big\}.$$
	\begin{lemma}
		$U^\mu$ can be written as follows:$$U^\mu = \bigcap_{m=1}^{\infty}\bigcap_{j=1}^{\infty}\bigcap_{s=1}^{\infty}\bigcup_{N \in \mu}E(m, j, s, N).$$
	\end{lemma}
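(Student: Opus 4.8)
The plan is to prove the asserted equality by a double inclusion; write $V$ for the right-hand side $\bigcap_{m}\bigcap_{j}\bigcap_{s}\bigcup_{N\in\mu}E(m,j,s,N)$. For the inclusion $U^\mu\subseteq V$, I would fix $a\in U^\mu$ and integers $m\ge 1$, $j\ge 1$, $s\ge 1$. Since $K_m$ is a compact subset of $\mathbb{C}\setminus\{0\}$ with connected complement and the polynomial $f_j$ is continuous on $K_m$ and holomorphic in its interior, the defining property of $U^\mu$ applies with $K=K_m$ and $h=f_j$: there is a strictly increasing sequence $(\lambda_\ell)_{\ell\in\mathbb{N}}$ in $\mu$ with $T_{\lambda_\ell}(a)\to f_j$ uniformly on $K_m$. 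Hence $\sup_{z\in K_m}|T_{\lambda_\ell}(a)(z)-f_j(z)|<1/s$ for all large $\ell$, so for such an $N=\lambda_\ell\in\mu$ we have $a\in E(m,j,s,N)$, i.e. $a\in\bigcup_{N\in\mu}E(m,j,s,N)$. As $m,j,s$ were arbitrary, $a\in V$.

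For the reverse inclusion $V\subseteq U^\mu$, I would fix $a\in V$, a compact set $K\subseteq\mathbb{C}\setminus\{0\}$ with connected complement, a function $h\colon K\to\mathbb{C}$ continuous on $K$ and holomorphic in its interior, and $\varepsilon>0$. By Mergelyan's theorem $h$ is a uniform limit on $K$ of polynomials, and since any polynomial can be approximated uniformly on a compact set by polynomials whose coefficients have rational real and imaginary parts, I can choose an index $j$ with $\sup_{z\in K}|h(z)-f_j(z)|<\varepsilon/2$. By Lemma 2 there is an $m$ with $K\subseteq K_m$. Applying the definition of $V$ with this $m$, this $j$, and an $s$ with $1/s<\varepsilon/2$, I obtain some $N\in\mu$ with $\sup_{z\in K_m}|T_N(a)(z)-f_j(z)|<1/s$; because $K\subseteq K_m$ the same bound holds over $K$, and the triangle inequality then yields $\sup_{z\in K}|T_N(a)(z)-h(z)|<\varepsilon$.

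Finally I would upgrade this single estimate to convergence: choosing $\varepsilon=1/p$ for $p=1,2,\dots$ produces indices $N_p\in\mu$ with $\sup_{z\in K}|T_{N_p}(a)(z)-h(z)|<1/p$, so $T_{N_p}(a)\to h$ uniformly on $K$ along a sequence in $\mu$. By the observation following Definition 1 (see \cite{Vlachou}), the existence of such a sequence, not necessarily strictly increasing, is equivalent to the existence of a strictly increasing one, whence $a\in U^\mu$. The main point requiring care is precisely this $\supseteq$ direction: the combined use of Mergelyan's theorem and the density of rational-coefficient polynomials is what replaces an arbitrary admissible pair $(K,h)$ by the countable data $(K_m,f_j)$ encoded in the sets $E(m,j,s,N)$, while the transfer of the estimate from $K_m$ to $K\subseteq K_m$ is immediate. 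I note that neither the continuity nor the surjectivity of the functions $b_n$ is needed for this reformulation; those hypotheses enter only later, in showing that each $E(m,j,s,N)$ is open and that the relevant unions are dense.
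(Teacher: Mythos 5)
Your proposal is correct and follows essentially the same route as the paper's proof: the forward inclusion is immediate from the definitions, and the reverse inclusion combines Mergelyan's theorem (with rational-coefficient polynomials), the exhaustion $K\subseteq K_m$ from Lemma 2, and the triangle inequality. Your explicit upgrade from a single $\varepsilon$-estimate to a strictly increasing sequence via the remark after Definition 1 is a detail the paper leaves implicit, but it is the same argument.
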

	\begin{proof}
		The inclusion $U^\mu \subseteq \bigcap\limits_{m=1}^{\infty}\bigcap\limits_{j=1}^{\infty}\bigcap\limits_{s=1}^{\infty}\bigcup\limits_{N \in \mu}E(m, j, s, N)$ follows obviously from the definitions of $U^\mu$ and $E(m, j, s, N).$ Let $$a \in  \bigcap_{m=1}^{\infty}\bigcap_{j=1}^{\infty}\bigcap_{s=1}^{\infty}\bigcup_{N \in \mu}E(m, j, s, N).$$ We shall show that $a \in U^\mu$. Let $K \subset \mathbb{C}\setminus\{0\}$ be a non-empty compact set having connected complement and $h : K \rightarrow \mathbb{C}$ a function, which is continuous on $K$ and holomorphic in the interior of $K$. Let $\varepsilon > 0$. We have to determine an integer $N \in \mu$, such that $$\sup_{z \in K}\big|T_N(a)(z) - h(z)\big| < \varepsilon.$$
		By Mergelyan's theorem there exists a polynomial $f_j, j = 1, 2, \dots$ having coefficients whose coordinates are both rational, such that $$\sup_{z \in K}\big|h(z) - f_j(z)\big| < \frac{\varepsilon}{2}.$$ There exists a compact set with connected complement $K_m, m = 1, 2, \dots$ given by Lemma 2, such that $K \subseteq K_m$. We can determine an $s$, such that $\frac{1}{s} < \frac{\varepsilon}{2}$. Then we have $a \in \bigcup\limits_{N \in \mu}E(m, j, s, N)$. Thus, there exists an integer $N \in \mu$, such that $$\sup_{z \in K_m}\big|T_N(a)(z) - f_j(z)\big| < \frac{1}{s}.$$ As we have $\sup\limits_{z \in K}\big|h(z) - f_j(z)\big| < \frac{\varepsilon}{2}$, $\sup\limits_{z \in K_m}\big|T_N(a)(z) - f_j(z)\big| < \frac{1}{s} < \frac{\varepsilon}{2}$ and $K \subseteq K_m$, the triangular inequality implies $$\sup_{z \in K}\big|T_N(a)(z) - h(z)\big| < \varepsilon.$$ This proves that $a \in U^\mu$ and completes the proof.
	\end{proof}
	\begin{lemma}
		For every integer $m \geq 1, j \geq 1, s \geq 1$ and $N \in \mu$, the set $E(m, j, s, N)$ is open in the space $\mathbb{C}^{\aleph_0}$.
	\end{lemma}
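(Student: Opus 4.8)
The plan is to exploit the fact that, for fixed $N$, the quantity $T_N(a)(z)$ depends only on the finitely many coordinates $a_0, \dots, a_N$ of the sequence $a$, together with the hypothesis that each $b_n$ is continuous. First I would introduce the coordinate projection $\pi_N : \mathbb{C}^{\aleph_0} \to \mathbb{C}^{N+1}$, $\pi_N(a) = (a_0, \dots, a_N)$, which is continuous for the product topology since each coordinate map is continuous. Then I would define the auxiliary function
$$\Phi : \mathbb{C}^{N+1} \to [0, +\infty), \qquad \Phi(c_0, \dots, c_N) = \sup_{z \in K_m}\Bigl|\sum_{n=0}^{N} b_n(c_0, \dots, c_n)\, z^n - f_j(z)\Bigr|,$$
so that $\sup_{z \in K_m}\bigl|T_N(a)(z) - f_j(z)\bigr| = \Phi(\pi_N(a))$ and hence
$$E(m, j, s, N) = \pi_N^{-1}\bigl(\Phi^{-1}([0, \tfrac{1}{s}))\bigr).$$
Once $\Phi$ is shown to be continuous, the set $\Phi^{-1}([0, 1/s))$ is open in $\mathbb{C}^{N+1}$, and its preimage under the continuous map $\pi_N$ is open in $\mathbb{C}^{\aleph_0}$, which is exactly what is required.

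The core step is therefore the continuity of $\Phi$. Here I would use the joint continuity of the map $g : \mathbb{C}^{N+1} \times \mathbb{C} \to [0,+\infty)$ defined by $g(c_0, \dots, c_N, z) = |\sum_{n=0}^{N} b_n(c_0, \dots, c_n) z^n - f_j(z)|$; this is continuous because each $b_n$ is continuous by hypothesis and because sums, products, the monomials $z^n$, and the modulus are continuous. The remaining ingredient is the standard fact that the supremum over a compact parameter set of a jointly continuous function is continuous: fixing $c^0 \in \mathbb{C}^{N+1}$ and a compact neighbourhood $V$ of $c^0$, the function $g$ is uniformly continuous on the compact set $V \times K_m$, so for every $\varepsilon > 0$ there is $\delta > 0$ with $|g(c, z) - g(c^0, z)| < \varepsilon$ for all $z \in K_m$ whenever $c \in V$ and $|c - c^0| < \delta$; taking the supremum over $z \in K_m$ then gives $|\Phi(c) - \Phi(c^0)| \le \varepsilon$, which proves continuity of $\Phi$ at $c^0$.

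I do not expect a genuine obstacle here, since the statement reduces to dependence on finitely many coordinates plus the elementary sup-of-a-continuous-function-over-a-compact-set lemma; the only points requiring a little care are that $\Phi$ is finite (the supremum is in fact attained, as $K_m$ is compact and $g(c^0, \cdot)$ is continuous) and that one restricts attention to $c$ in the compact neighbourhood $V$, so that uniform continuity on $V \times K_m$ may legitimately be invoked. An alternative, more hands-on route would bound the increment $|\Phi(c) - \Phi(c^0)|$ directly by $\max_{0 \le n \le N}\sup_{z \in K_m}|z|^n$ times the maximal variation of the continuous functions $b_n$ over $V$, but the uniform-continuity argument is cleaner and avoids estimating each term separately.
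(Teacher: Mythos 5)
Your proposal is correct. It differs from the paper's proof mainly in organization rather than in substance: the paper argues directly at a point $a \in E(m,j,s,N)$, setting $\varepsilon$ equal to the slack $\frac{1}{s} - \sup_{w \in K_m}|T_N(a)(w) - f_j(w)|$ divided by $2(N+1)M$ with $M = \max\{1, \sup_{z\in K_m}|z|^N\}$, invoking continuity of each $b_n$ at $(a_0,\dots,a_n)$ to get a $\delta$, and then verifying by the term-by-term bound $\sum_{n=0}^{N}|b_n(c_0,\dots,c_n)-b_n(a_0,\dots,a_n)|\,|z|^n < (N+1)\varepsilon M$ that the basic product-topology neighbourhood $\{c : |c_k - a_k| < \delta/\sqrt{N+1},\ k \le N\}$ sits inside $E(m,j,s,N)$ --- essentially the ``hands-on route'' you mention at the end. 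You instead factor $E(m,j,s,N)$ as $\pi_N^{-1}(\Phi^{-1}([0,\frac{1}{s})))$ and prove continuity of the sup-functional $\Phi$ via uniform continuity of $g$ on the compact set $V \times K_m$. Your version is cleaner and avoids bookkeeping with explicit constants, at the price of invoking the standard sup-over-a-compact-set lemma; the paper's version is more elementary and self-contained, producing the required neighbourhood explicitly. Both hinge on exactly the same two facts (dependence of $T_N$ on finitely many coordinates, continuity of the $b_n$), and your treatment of the minor points (finiteness of $\Phi$, restricting to a compact neighbourhood $V$ before invoking uniform continuity) is careful and complete.
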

	\begin{proof}
		Let $a = (a_0, a_1, \dots) \in E(m, j, s, N)$. Then we have $$\sup_{z \in K_m}\big|T_N(a)(z) - f_j(z)\big| < \frac{1}{s}.$$ Let $M := \max\big\{1, \sup_{z \in K_m}|z|^N\big\}$. We set now: $$\varepsilon = \dfrac{\frac{1}{s} - \sup_{w \in K_m}\big|T_N(a)(w) - f_j(w)\big|}{2(N + 1)M} > 0.$$
		For $n = 0, 1, \dots, N$ the function $b_n$ is continuous at $(a_0, a_1, \dots, a_n)$, so there exists $\delta_n > 0$ such that $|b_n(c_0, c_1, \dots, c_n) - b_n(a_0, a_1, \dots, a_n)| < \varepsilon$ for $(c_0, c_1, \dots, c_n) \in \mathbb{C}^{n + 1}$ with $\sqrt{\sum\limits_{k=0}^{n}|c_k - a_k|^2} < \delta_n$. We set $\delta = \min\{\delta_0, \delta_1, \dots, \delta_N\}$. Suppose that $c = (c_0, c_1, \dots) \in \mathbb{C}^{\aleph_0}$ satisfies $|c_k - a_k| < \frac{\delta}{\sqrt{N + 1}}$ for $k = 0, 1, \dots,N$. We shall show that $$\sup_{z \in K_m}\big|T_N(c)(z) - f_j(z)\big| < \frac{1}{s}$$ and therefore that $c \in E(m, j, s, N)$. This will prove that $E(m, j, s, N)$ is indeed open. For $n = 0, 1, \dots, N$ we have $$\sqrt{\sum\limits_{k=0}^{n}|c_k - a_k|^2} < \sqrt{\sum\limits_{k=0}^{n}\Big(\frac{\delta}{\sqrt{N + 1}}\Big)^2} \leq \sqrt{\sum\limits_{k=0}^{N}\frac{\delta^2}{N + 1}} = \delta \leq \delta_n$$ and so $|b_n(c_0, c_1, \dots, c_n) - b_n(a_0, a_1, \dots, a_n)| < \varepsilon$. For $z \in K_m$, we have $$\big|T_N(c)(z) - f_j(z)\big| \leq \big|T_N(c)(z) - T_N(a)(z)\big| + \big|T_N(a)(z) - f_j(z)\big| =$$\\$$= \big|\sum_{n=0}^{N}b_n(c_0, c_1, \dots, c_n)z^n - \sum_{n=0}^{N}b_n(a_0, a_1, \dots, a_n)z^n\big| + \big|T_N(a)(z) - f_j(z)\big| \leq$$\\$$\leq \sum_{n=0}^{N}|b_n(c_0, c_1, \dots, c_n) - b_n(a_0, a_1, \dots, a_n)|\cdot|z|^n + \big|T_N(a)(z) - f_j(z)\big| <$$\\
		$$< \sum_{n=0}^{N}\varepsilon M + \big|T_N(a)(z) - f_j(z)\big| =$$\\$$= \sum_{n=0}^{N}\dfrac{\frac{1}{s} - \sup_{w \in K_m}\big|T_N(a)(w) - f_j(w)\big|}{2(N + 1)} + \big|T_N(a)(z) - f_j(z)\big| =$$\\$$= \frac{1}{2s} - \frac{1}{2}\sup_{w \in K_m}\big|T_N(a)(w) - f_j(w)\big| + \big|T_N(a)(z) - f_j(z)\big|.$$ Hence, $$\sup_{z \in K_m}\big|T_N(c)(z) - f_j(z)\big| \leq \frac{1}{2s} < \frac{1}{s}$$
		and the proof is completed.
	\end{proof}
	\begin{lemma}
		For every integer $m \geq 1, j \geq 1$ and $s \geq 1$, the set $\bigcup\limits_{N \in \mu}E(m, j, s, N)$ is open and dense in the space $\mathbb{C}^{\aleph_0}$.
	\end{lemma}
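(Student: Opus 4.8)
Openness is immediate: by Lemma 5 each $E(m,j,s,N)$ is open, and an arbitrary union of open sets is open. The whole content of the statement is therefore the density of $\bigcup_{N \in \mu}E(m,j,s,N)$, and this is the step I expect to be the main obstacle.

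To prove density I would exploit that the product topology on $\mathbb{C}^{\aleph_0}$ has a basis of sets constraining only finitely many coordinates. Thus, given an arbitrary $a = (a_0, a_1, \dots) \in \mathbb{C}^{\aleph_0}$, an integer $p \geq 0$ and $\varepsilon > 0$, it suffices to produce some $c = (c_0, c_1, \dots)$ with $|c_k - a_k| < \varepsilon$ for $k = 0, 1, \dots, p$ and with $c \in E(m,j,s,N)$ for some $N \in \mu$. I would keep the first block of coordinates fixed, setting $c_k = a_k$ for $0 \leq k \leq p$; then the low-order coefficients $\beta_n := b_n(c_0, \dots, c_n) = b_n(a_0, \dots, a_n)$, $0 \leq n \leq p$, are determined and cannot be altered. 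The remaining coordinates $c_{p+1}, c_{p+2}, \dots$ are free and will be used to steer $T_N(c)$ toward $f_j$ on $K_m$.

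The key leverage is the surjectivity hypothesis: having fixed $c_0, \dots, c_{n-1}$, the map $z \mapsto b_n(c_0, \dots, c_{n-1}, z)$ is onto $\mathbb{C}$, so for each $n > p$ I can choose $c_n$ so that $b_n(c_0, \dots, c_n)$ equals any prescribed value $\gamma_n$; doing this inductively in $n$ realizes $T_N(c)(z) = \sum_{n=0}^{p}\beta_n z^n + \sum_{n=p+1}^{N}\gamma_n z^n$ for any choice of $\gamma_{p+1}, \dots, \gamma_N$. It thus remains to find such a high-order polynomial making $\sup_{z \in K_m}|T_N(c)(z) - f_j(z)|$ small. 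Writing $g(z) := f_j(z) - \sum_{n=0}^{p}\beta_n z^n$, the task is to approximate $g$ uniformly on $K_m$ by a polynomial $\sum_{n=p+1}^{N}\gamma_n z^n = z^{p+1}R(z)$ whose monomials all have degree exceeding $p$.

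Here I would use the fact, crucial throughout this circle of ideas, that $K_m \subset \mathbb{C}\setminus\{0\}$, so $|z|$ is bounded below by some $\rho > 0$ on $K_m$. Consequently $g(z)/z^{p+1}$ is holomorphic on $\mathbb{C}\setminus\{0\}$, in particular continuous on $K_m$ and holomorphic in its interior; since $K_m$ has connected complement, Mergelyan's theorem furnishes a polynomial $R$ with $\sup_{z \in K_m}|R(z) - g(z)/z^{p+1}|$ as small as desired, whence $\sup_{z \in K_m}|z^{p+1}R(z) - g(z)| \leq \big(\sup_{z \in K_m}|z|^{p+1}\big)\sup_{z \in K_m}|R(z) - g(z)/z^{p+1}|$ can be made $< 1/s$. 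Finally, because $\mu$ is infinite I may pick $N \in \mu$ with $N \geq p + 1 + \deg R$, so that $z^{p+1}R(z) = \sum_{n=p+1}^{N}\gamma_n z^n$ has only the admissible powers; realizing these $\gamma_n$ by surjectivity as above produces $c$ with $T_N(c)(z) - f_j(z) = z^{p+1}R(z) - g(z)$ on $K_m$, so $\sup_{z \in K_m}|T_N(c)(z) - f_j(z)| < 1/s$, i.e. $c \in E(m,j,s,N)$. This yields density and completes the proof. The only delicate point is the factorization-plus-Mergelyan step, which is exactly where the hypotheses $0 \notin K_m$ and the surjectivity of $b_n$ in the last variable are both indispensable.
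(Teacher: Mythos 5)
Your proposal is correct and follows essentially the same route as the paper: fix the first block of coordinates, use the surjectivity of $b_n$ in its last variable to realize arbitrary prescribed coefficients $\gamma_n$ for $n > p$, and apply Mergelyan's theorem to $\bigl(f_j(z) - \sum_{n=0}^{p}\beta_n z^n\bigr)/z^{p+1}$ on $K_m$ (legitimate precisely because $0 \notin K_m$ and $K_m^c$ is connected), then choose $N \in \mu$ large enough. The only cosmetic difference is that the paper carries the factor $M = \sup_{z \in K_m}|z|^{p+1}$ explicitly into the Mergelyan tolerance $\tfrac{1}{2Ms}$, which is exactly your multiplication step.
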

	\begin{proof}
		By Lemma 4 the sets $E(m, j, s, N), N \in \mu$ are open. Therefore the same is true for the union $\bigcup\limits_{N \in \mu}E(m, j, s, N)$. We shall prove that this set is also dense. Let $a = (a_0, a_1, \dots) \in \mathbb{C}^{\aleph_0}, N_0$ be an integer such that $N_0 \geq 0$ and $\varepsilon > 0$. It suffices to find $N \in \mu$ and $c = (c_0, c_1, \dots) \in E(m, j, s, N)$, such that $$|c_n - a_n| < \varepsilon \text{ for } n \leq N_0.$$ Let $M := \sup\limits_{z \in K_m}|z|^{N_0 + 1}$. We set $c_n = a_n$ for $n \leq N_0$ and so $ b_n(c_0, c_1, \dots, c_n) =$\\$= b_n(a_0, a_1, \dots, a_n) \text{ for } n \leq N_0.$ We need to find $N \in \mu$ such that $$\sup_{z \in K_m}\big|T_N(c)(z) - f_j(z)\big| < \frac{1}{s}.$$ We have $$\sup\limits_{z \in K_m}\big|T_N(c)(z) - f_j(z)\big| = \sup\limits_{z \in K_m}\big|\sum_{n=0}^{N}b_n(c_0, c_1, \dots, c_n)z^n - f_j(z)\big| =$$\\
		$$= \sup\limits_{z \in K_m}\big|\sum_{n=N_0 + 1}^{N}b_n(c_0, c_1, \dots, c_n)z^n + \sum_{n=0}^{N_0}b_n(a_0, a_1, \dots, a_n)z^n - f_j(z)\big| =$$\\
		$$= \sup\limits_{z \in K_m}\big|z^{N_0 + 1}\sum_{n=N_0 + 1}^{N}b_n(c_0, c_1, \dots, c_n)z^{n - N_0 - 1} + \sum_{n=0}^{N_0}b_n(a_0, a_1, \dots, a_n)z^n - f_j(z)\big| =$$\\
		$$= \sup\limits_{z \in K_m}|z^{N_0 + 1}|\cdot\big|\sum_{n=N_0 + 1}^{N}b_n(c_0, \dots, c_n)z^{n - N_0 - 1} - \dfrac{f_j(z) - \sum_{n=0}^{N_0}b_n(a_0, \dots, a_n)z^n}{z^{N_0 + 1}}\big| \leq$$\\
		$$\leq M\sup\limits_{z \in K_m}\big|\sum_{n=N_0 + 1}^{N}b_n(c_0, c_1, \dots, c_n)z^{n - N_0 - 1} - \dfrac{f_j(z) - \sum_{n=0}^{N_0}b_n(a_0, a_1, \dots, a_n)z^n}{z^{N_0 + 1}}\big|.$$
		Since $0 \notin K$ and $K^c$ is connected, by Mergelyan's theorem there exists a polynomial $p(z) = p_0 + p_1z + \dots + p_mz^m$ such that $$\sup\limits_{z \in K_m}\big|p(z) - \dfrac{f_j(z) - \sum_{n=0}^{N_0}b_n(a_0, a_1, \dots, a_n)z^n}{z^{N_0 + 1}}\big| < \frac{1}{2Ms}.$$ The function $$\mathbb{C}\ni z \rightarrow b_{N_0 + 1}(a_0, a_1, \dots , a_{N_0}, z)$$ is onto $\mathbb{C}$ so the equation $ b_{N_0 + 1}(a_0, a_1, \dots , a_{N_0}, z) = p_0$ has a solution $c_{N_0 + 1} \in \mathbb{C}$. Similarly, we can find $c_{N_0 + 2}, \dots , c_{N_0 + m + 1}$ such that $b_{N_0 + n + 1}(c_0, c_1, \dots , c_{N_0 + n + 1}) = p_n$ for $n = 1, 2, \dots, m$ and $c_{N_0 + m +2}, c_{N_0 + m +3}, \dots$ such that $b_{N_0 + n + 1}(c_0, c_1, \dots , c_{N_0 + n + 1}) = 0$ for $n > m$. By choosing $N \in \mu$ such that $N \geq m + N_0 + 1$ we have $$\sup_{z \in K_m}\big|T_N(c)(z) - f_j(z)\big| \leq \frac{1}{2s} < \frac{1}{s}.$$ This proves that the set $\bigcup\limits_{N \in \mu}E(m, j, s, N)$ is indeed dense.
	\end{proof}
	\begin{theorem}
		Under the above assumptions and notation, the set $U^\mu$ is a $G_\delta$ and dense subset of the space $\mathbb{C}^{\aleph_0}$.
	\end{theorem}
	\begin{proof}
		The result is obvious by combining the previous lemmas with Baire's Theorem.
	\end{proof}
	\begin{theorem}
		Under the above assumptions and notation, assuming in addition that the functions $b_n$ are linear, then the set $U^\mu \cup \{0\}$ contains a vector space, dense in $\mathbb{C}^{\aleph_0}$.
	\end{theorem}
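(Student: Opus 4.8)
The plan is to use the linearity hypothesis to transport the problem to the classical partial-sum setting, to record there two clean invariance properties, and then to build the required subspace by a block-construction that forces universality of \emph{every} nonzero linear combination. Since each $b_n$ is linear, we may write $T_N(a)(z) = S_N(\Phi a)(z)$, where $S_N(c)(z)=\sum_{n=0}^N c_n z^n$ is the ordinary $N$-th partial sum and $\Phi\colon\mathbb C^{\aleph_0}\to\mathbb C^{\aleph_0}$ is the map $(\Phi a)_n=\sum_{k=0}^n\lambda_{n,k}a_k$. As the matrix $(\lambda_{n,k})$ is lower-triangular with nonzero diagonal, $\Phi$ is a linear bijection whose inverse is again triangular, and both $\Phi,\Phi^{-1}$ are continuous for the product topology; hence $\Phi$ is a linear homeomorphism. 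Therefore $U^\mu=\Phi^{-1}(V^\mu)$, where $V^\mu$ denotes the Seleznev universal set relative to the partial sums $S_N$ and the index set $\mu$. Since a linear homeomorphism carries dense subspaces to dense subspaces, it suffices to produce a dense subspace $F$ of $\mathbb C^{\aleph_0}$ with $F\setminus\{0\}\subseteq V^\mu$; then $\Phi^{-1}(F)$ is the desired subspace for $U^\mu$. (One could also argue directly with $T_N$, but the partial-sum coordinates make the next steps cleaner.)

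Next I would record three facts about $V^\mu$. It is a \emph{cone}: if $c\in V^\mu$ and $\alpha\neq0$ then $S_N(\alpha c)=\alpha S_N(c)$, so approximating $h/\alpha$ by a subsequence of $S_N(c)$ yields approximation of $h$ by $S_N(\alpha c)$. It is \emph{invariant under adding a polynomial}: if $c\in V^\mu$ and $q$ is finitely supported then $S_N(c+q)=S_N(c)+q$ for $N\ge\deg q$, so approximating $h-q$ by $c$ gives approximation of $h$ by $c+q$. Finally, $V^\mu$ contains no nonzero finitely supported sequence, since the partial sums of a polynomial are eventually constant.

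The crux is \emph{lineability}: to construct a linearly independent sequence $(w_k)_{k\ge1}\subseteq V^\mu$ all of whose nonzero finite combinations lie in $V^\mu$. I would build the $w_k$ simultaneously over pairwise disjoint blocks of indices. Enumerate all tasks $(r,P,K_m,s)$ with $r\ge1$, $P$ a polynomial with rational coordinates, $K_m$ as in Lemma 2, and $s\ge1$; to the $t$-th task allot a block lying beyond all previous ones together with a time $N\in\mu$ in it. On that block I use the Mergelyan/division mechanism of Lemma 5 to set the coefficients of $w_r$ so that $S_N(w_r)\approx P$ on $K_m$, to set those of $w_k$ for $k<r$ so as to \emph{cancel their accumulated value}, i.e.\ $S_N(w_k)\approx 0$ on $K_m$, and to leave $w_k$ untouched (zero on the block) for $k>r$. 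The uncountable family of coefficient vectors is then handled uniformly: given a nonzero $v=\sum_{k=1}^r\alpha_k w_k$ with $\alpha_r\neq0$ and a target $h$ on some $K_m$, pick a task $(r,P,K_m,s)$ with $P$ close to $h/\alpha_r$ and $1/s$ small; at its time $N\in\mu$ one has $S_N(w_k)\approx0$ for $k<r$ and $S_N(w_r)\approx h/\alpha_r$, so $S_N(v)=\sum_k\alpha_kS_N(w_k)\approx h$. Thus every nonzero combination is universal, hence nonzero, which also gives linear independence.

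Finally I would upgrade lineability to density using the invariances above. Fix a sequence $(q_k)$ of finitely supported points dense in $\mathbb C^{\aleph_0}$, and by the cone property replace each $w_k$ by a multiple $\varepsilon_k w_k$ whose first $k$ coordinates are negligible; put $v_k=q_k+\varepsilon_k w_k$ and $F=\operatorname{span}\{v_k\}$. Since $v_k$ is coordinatewise close to $q_k$, the set $F$ is dense; and any nonzero combination equals a finitely supported sequence plus a nonzero combination of the $w_k$, which lies in $V^\mu$ by the two invariances together with the previous paragraph (the fact that no nonzero polynomial is universal also yields the independence of the $v_k$). Hence $F\setminus\{0\}\subseteq V^\mu$, and $\Phi^{-1}(F)$ is a dense subspace contained in $U^\mu\cup\{0\}$. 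I expect the main obstacle to be precisely the uniformity in the lineability step: a single block-time can serve only one coefficient vector, so the construction must be arranged through the device of \emph{resetting the lower components to zero and dumping the entire target into the top component}, so that a countable family of tasks suffices to make all of the uncountably many nonzero combinations universal. Care is also needed to place each block beyond the support used so far (so the cancellations and approximations do not interfere) and to take the times $N$ in $\mu$ and arbitrarily large, so as to produce a strictly increasing $(\lambda_m)$.
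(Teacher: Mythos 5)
Your proposal is correct, and it is essentially the argument the paper delegates to the implication $(3)\Rightarrow(4)$ of Theorem 3 in the reference of Bayart et al.: the core mechanism there is exactly your block construction in which, at the times $N\in\mu$ allotted to the generator $w_r$, the partial sums of the lower generators are driven to $0$ and the whole target is dumped into $w_r$, combined with the cone property (to divide by the top coefficient $\alpha_r$) and invariance under adding finitely supported sequences (to pass from the lineable family $(w_k)$ to a dense family $(q_k+\varepsilon_k w_k)$). The one genuinely different ingredient is your preliminary conjugation by the lower-triangular homeomorphism $\Phi$, which reduces $T_N$ to the classical partial sums $S_N$; the cited argument instead works directly with the maps $T_N$, for which linearity of the $b_n$ gives the same three invariance properties, so this is a repackaging rather than a new route (and a rather clean one, since it makes transparent why linearity is assumed in Theorem 7 but not in Theorem 6). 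Two small points to nail down in a full write-up: each block must be chosen, after computing the Mergelyan polynomials for that task, long enough to hold all $r$ of them and to end at a point of $\mu$; and for the density of $\{q_k+\varepsilon_k w_k\}$ you should take $(q_k)$ to be an enumeration of all finitely supported rational points (so that every basic neighbourhood contains $q_k$ for arbitrarily large $k$, since your smallness condition on $\varepsilon_k w_k$ only controls the first $k$ coordinates).
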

	The proof uses the result of Theorem 6, follows the lines of the implication $(3) \implies (4)$ of the proof of Theorem 3 in \cite{Bayart} and is omitted.
	\section{Remarks and Comments}
	\par
	The assumptions of the previous section are valid in particular when $b_n(a_0, \dots, a_n) = a_n$ which gives the classical result of Seleznev. Also, it covers the interesting case $b_n(a_0, \dots, a_n) = \frac{a_0 + \dots + a_n}{n + 1}$.
	
	More generally, if $\psi_n:\mathbb{C}\rightarrow\mathbb{C}, n= 0, 1, \dots$ are homeomorphisms and $\lambda_{n,k} \in \mathbb{C}$, $ 0 \leq k \leq n$, $ n \in \mathbb{N}$, $ \lambda_{n,n} \neq 0$, we can set $b_n(a_0, \dots, a_n) = \psi_n\big(\sum_{k = 0}^{n}\lambda_{n,k}a_k\big)$ and the results of the previous section are valid.
	
	Another remark is that in order to prove that $U^\mu$ is a $G_\delta$ set we only need the continuity of the functions $b_n : \mathbb{C}^{n + 1}\rightarrow\mathbb{C}$ (1). We do not need the assumption that for every $a_0, a_1, \dots , a_{n - 1} \in \mathbb{C}$, the function $\mathbb{C}\ni z \rightarrow b_n(a_0, a_1, \dots , a_{n - 1}, z)$ is onto $\mathbb{C}$ (2). It is also true that using only assumption (2) we can prove that $U^\mu$ is dense in $\mathbb{C}^{\aleph_0}$.
	
	Indeed, from the classical result of Seleznev, there exist formal power series $c_0 + c_1z + c_2z^2 + \dots$ such  that for every compact set $K \subset \mathbb{C}\setminus\{0\}$ with connected complement and for every function $h : K\rightarrow\mathbb{C}$, which is continuous on $K$ and holomorphic in the interior of $K$, there exists a sequence of integers $(\lambda_m)_{m \in \mathbb{N}},\lambda_m \in \mu$ such that $c_0 + c_1z + \dots + c_{\lambda_m}z^{\lambda_m} \longrightarrow h$ uniformly on K, as $m \longrightarrow +\infty$. Also, we can modify a finite set of coefficients $c_k$ and still have the same universal approximation.
	
	Let $a_0, a_1 , \dots, a_{N_0} \in \mathbb{C}$ be fixed. It suffices to show that we can find $a_{N_0 + 1}, a_{N_0 + 2}, \dots \in \mathbb{C}$ such that $a = (a_0, a_1 , \dots, a_{N_0}, a_{N_0 + 1}, a_{N_0 + 2}, \dots) \in U^\mu$. We set $\delta_k = b_k(a_0, a_1 , \dots, a_{k}), 0 \leq k \leq N_0$. As we have already mentioned, we can find a formal power series of Seleznev $c = (c_0, c_1, \dots)$ satisfying $c_k = \delta_k, 0 \leq k \leq N_0$.
	Then, because the function $\mathbb{C}\ni z \rightarrow b_{N_0 + 1}(a_0, a_1, \dots , a_{N_0}, z)$ is onto $\mathbb{C}$, we can find $a_{N_0 + 1}$ such that $b_{N_0 + 1}(a_0, a_1, \dots , a_{N_0}, a_{N_0 + 1}) = c_{N_0 + 1}$. Continuing in this way we can find $a = (a_0, a_1 , \dots, a_{N_0}, a_{N_0 + 1}, a_{N_0 + 2}, \dots) \in \mathbb{C}^{\aleph_0}$ such that $b_n(a_0, a_1, \dots , a_n) = c_n$ for every $n \in \mathbb{N}$. Therefore $a \in U^\mu$. This proves that $U^\mu$ is dense.
	\\[2\baselineskip]
	\par
	\textit{Acknowledgment}. We would like to thank G. Costakis for helpful communications.
	
	\itshape
	Department of Mathematics\\
	Panepistimiopolis\\
	National and Kapodistrian University of Athens\\
	Athens, 15784\\
	Greece\\[\baselineskip]
	E-mail Addresses:\\
	conmaron@gmail.com\\
	vnestor@math.uoa.gr
\end{document}